\newtheorem{thm}{Theorem}%[section]
\newtheorem{lem}[thm]{Lemma}
\newtheorem{prop}[thm]{Proposition}
\newtheorem{exa}[thm]{Example}
\newtheorem{prob}{Problem}
\newtheorem{rem}[thm]{Remark}
\def\CT{\mathop{\mathrm{CT}}}
\def\srem{\;\mathrm{srem}}
\def\mrem{\;\mathrm{rem}}
\title{A Euclid style algorithm for MacMahon's partition analysis}
\author{Guoce Xin $^{1,2}$\\
{\small $^{1}$ School of Mathematical Sciences, Capital Normal University, Beijing 100048, PR China}\\%[-0.8ex]
{ $^{2}$ Beijing Center for Mathematics and \\
Information Interdisciplinary Sciences, Beijing, 100048, PR China}\\
\small \texttt{guoce.xin@gmail.com}\\
}
\date{Oct. 20, 2012}
\begin{document}

\maketitle

%\begin{small}
%\noindent
%Abstract:
%Solutions to a linear Diophantine system, or lattice points in a
%rational convex polytope, are important concepts in algebraic
%combinatorics and computational geometry. The enumeration problem is
%fundamental and has been well studied, because it has many
%applications in various fields of mathematics. In algebraic
%combinatorics, MacMahon partition analysis has become a general
%approach for linear Diophantine system related problems. Many
%algorithms have been developed, but ``bottlenecks" always
%arise when dealing with complex problems. While in computational
%geometry, Barvinok's  important result asserts the existence of a polynomial time algorithm
%when the dimension is fixed. However, the implementation by the LattE
%package of De Loera et. al. does not perform well in many situations. By
%combining excellent ideas in the two fields, we generalize Barvinok's
%result by giving a polynomial time algorithm for MacMahon partition
%analysis in a suitable condition. We also present an elementary Euclid style algorithm,
%which might not be polynomial but is easy to implement and performs well.
%As applications, we contribute the generating series for magic squares of order 6.
%\end{small}

\begin{abstract}
Solutions to a linear Diophantine system, or lattice points in a
rational convex polytope, are important concepts in algebraic
combinatorics and computational geometry. The enumeration problem is
fundamental and has been well studied, because it has many
applications in various fields of mathematics. In algebraic
combinatorics, MacMahon's partition analysis has become a general
approach for linear Diophantine system related problems. Many
algorithms have been developed, but ``bottlenecks" always
arise when dealing with complex problems. While in computational
geometry, Barvinok's  important result asserts the existence of a polynomial time algorithm
when the dimension is fixed. However, the implementation by the LattE
package of De Loera et. al. does not perform well in many situations. By
combining excellent ideas in the two fields, we generalize Barvinok's
result by giving a polynomial time algorithm for MacMahon's partition
analysis in a suitable condition. We also present an elementary Euclid style algorithm,
which might not be polynomial but is easy to implement and performs well.
As applications, we contribute the generating series for magic squares of order 6.

\end{abstract}

\medskip

\noindent
{\small \emph{Mathematics Subject Classification}. Primary 05-04,
secondary   05A15, 52B99.}

\noindent
{\small \emph{Key words}. MacMahon's partition analysis, polytopes, lattice points,
Ehrhart polynomials}

\section{Introduction}

The linear Diophantine system is one of the most fundamental concepts in mathematics. One basic problem is to
determine the set of non-negative integer solutions of a system of linear equations (or inequalities)
$A\alpha=b$ for a suitable integral matrix $A$ and vector $b$. In the context of geometry, the problem is to determine
the lattice points in a rational convex polyhedron $P=\{\alpha : A\alpha =b,\; \alpha \ge 0 \}$ specified by $A$ and $b$. That is, we need to determine
the set $P\cap \mathbb{Z}^n$.
If $b=0$ then the linear Diophantine system is called homogeneous, and the corresponding $P$ is called a rational cone.
This basic problem received much attention for its wide application in many fields of mathematics. Many theories have been developed but this paper will address the algorithmic aspect. There are many algorithms dealing with linear Diophantine system related problems. Two algorithms in two different fields have great advantages (in running time). One is Barvinok's polynomial time algorithm in computational geometry \cite{Barvinok} and the other is the author's partial fraction algorithm \cite{iterate} in algebraic combinatorics.

We will use the shorthand notation $x^\alpha=x_1^{\alpha_1} x_2^{\alpha_2} \cdots x_n^{\alpha_n}$ throughout this paper.
To understand the complexity of this problem, let us specify that $A$ is an $r\times n$ matrix of rank $r$
and consider the homogeneous case. The structure of the $\mathbb{Z}$-solutions $\{ \alpha \in \mathbb{Z}^n: A\alpha =0 \}$ is simple, since they
form a subgroup of $\mathbb{Z}^n$ with $n-r$ generators, which can be obtained through Hermite normal form. The structure of nonnegative integer solutions
$E=\{ \alpha \in \mathbb{N}^n: A\alpha =0 \}$ is only a free commutative monoid (semigroup with identity). There is no simple way to enumerate the elements of $E$, which is equivalent to the construction of the \emph{rational} generating series
$$ E(x)=E(x;A,0)= \sum_{\alpha \in E} x^{\alpha}=\frac{P(x)}{(1-x^{\beta_1})(1-x^{\beta_2})\cdots (1-\beta^{\beta_N})},$$
where $P(x)$ might be a monster polynomial and $\{\beta_i\}_{1\le i\le N}$ consists of all complete fundamental elements of $E$ \cite[Thm.~4.6.11]{EC1}. See \cite[Ch.~6]{EC1} for related terminology and combinatorial theories developed there. Many practical problems can be solved by some specializations of ${E}(x)$, such as ${E}(q,q,\dots, q)$.
It is worth mentioning that there is a beautiful reciprocity theorem for rational cones due to Stanley, which gives a simple connection between nonnegative solutions and positive solutions.

Our algorithms are under the framework of algebraic combinatorics, but borrow some beautiful ideas from computational geometry. The core problem is to compute the constant term in $\Lambda=(\lambda_1,\dots, \lambda_r)$ of an Elliott-rational function $\mathcal E$, written as
\begin{align}
\CT_\Lambda \mathcal E=   \CT_{\Lambda} \frac{L}{(1-M_1)(1-M_2)\cdots (1-M_n)},\label{e-Introduction}
\end{align}
where $L$ is a Laurent polynomial in $\Lambda$ and $ M_i$ is a monomial containing the $\lambda$ variables for each $i$. Following \cite{iterate}, here we specify a field of iterated Laurent series, called the working field, to clarify the series expansion of rational functions. George Andrews found that MacMahon's partition analysis could be ideally combined with computer algebra for dealing with linear Diophantine system related problems \cite{MPA1}. Andrews and his coauthors have published a series of 12 papers on this topic. The first such algorithm was developed by Andrews et al. and implemented by the Mathematica package \texttt{Omega} \cite{MPAA1}. An improvement appears in \cite{MPAA2}. These algorithms rely on the \emph{unique series expansion} of rational functions.
The author made significant progress with his partial fraction algorithm \cite{iterate}, which is  implemented in the updated Maple package \texttt{Ell2}. The practical running time of \texttt{Ell2} is much faster than that of \texttt{Omega}. The author's partial fraction algorithm made two major contributions: 1) It introduced the framework of iterated Laurent series to guarantee the unique series expansion of all rational functions. Note that rational functions like $(\lambda_1-\lambda_2)^{-1}$ were not allowed to appear in the framework of complex analysis. 2) It gave an efficient formula for computing the partial fraction decompositions, and the constant term can be read off separately. However, there are bottlenecks for this approach when the denominator has multiple roots or nonlinear factors. See Section \ref{s-Euclid} for a detailed description. We find the ideas from computational geometry helpful in resolving these bottlenecks.

A simpler model has been extensively studied earlier in computational geometry. Let $P$ be as above specified by integral matrix $A=(a_{ij})_{r\times n}$ and nonzero vector $b$. Then it is well-known that the generating function for $P\cap \mathbb{N}^n$ can be written as a constant term:
\begin{align}
E(x;A,b)=\sum_{\alpha \in P\cap \mathbb{N}^n} x_1^{\alpha_1} \cdots x_n^{\alpha_n} = \CT_{\Lambda} \frac{\lambda_1^{-b_1} \cdots \lambda_r^{-b_r}}{
\prod_{j=1}^n (1- \lambda_1^{a_{1,j}} \lambda_2^{a_{2,j}} \cdots \lambda_r^{a_{r,j}} x_j)}. \label{e-Introduction-P}
 \end{align}
So this is just the special case of \eqref{e-Introduction} when the numerator $L$ is a monomial. If $P$ is bounded, then $P$ is called a polytope and $E(x;A,b)$ is a polynomial. Geometers are mainly interested in the specialization $x_i=1$ for all $i$, which gives the number of lattice points in $P$. The most important result in this field is due to Barvinok, who developed a polynomial time algorithm when the dimension is fixed \cite{Barvinok} in 1994. Barvinok's algorithm relies on solving linear programming problems. It was implemented by the \texttt{LattE} package by De Loera et al. \cite{LattE} in 2004. An improvement was given in \cite{LattEup}. The readers are referred to \cite{LattE} for related references and \cite{Beck} for related applications. The author's algorithm is not polynomial time, but the \texttt{Ell2} package has better performance than the \texttt{LattE} package when $r$ and the entries of $(A,b)$ are small. The two algorithms are very different in nature, as described in Section 2. We find that we can do better if we combine the beautiful ideas of the two algorithms.

This paper is organized as follows. Section 1 is this introduction. Our ultimate goal is to develop a classic algorithm for this subject in the near future. Section 2 introduces and compares the ideas of Barvinok's polynomial time algorithm  and the author's partial fraction algorithm. Section 3 includes one of the two major contributions in this paper: We extend Barvinok's algorithm for the multivariate specialization, which gives rise to a polynomial time algorithm for MacMahon's partition analysis. We do not give an implementation of this algorithm since it involves too much geometry and there is much room for improvement. Section 4 includes the other major contribution. We develop a Euclid style algorithm with an implementation by the Maple package \texttt{CTEuclid}. This algorithm performs well and resolves several bottlenecks in the \texttt{Ell2} package. In Section 5, we give an introduction for \texttt{CTEuclid} with concrete examples for the sake of clarity. We explain the flexibility of our algorithm and our strategy for benchmark problems. As an application, we give the first solution for the generating function of order 6 magic squares.

\section{Comparison of Barvinok's polynomial time algorithm and the author's partial fraction algorithm}
Barvinok's algorithm is in the context of computational geometry and the author's algorithm is along the line of MacMahon's partition analysis in algebraic combinatorics. In this section we compare the two algorithms and conclude that a better strategy is to combine the nice ideas of the two algorithms.
It would be helpful to give a brief description of the two algorithms. We follow the notation from the introduction and make the following clarification:

Throughout this paper, $d$ is always referred to as the dimension or an upper bound for the dimension of the corresponding polytope. We always assume that the matrix $A_{r\times n}$ has rank $r$. Then the dimension of the null-space of $A$ is $d=n-r$.
The polytope $P$ specified by $A$ and $b$ is the intersection of $\mathbb{R}^n$ with a certain shift of the null-space of $A$, so the (affine) dimension of $P$ is no more than $d$. It is possible that $P$ has a lower dimension. For example, if $(A,b)=(1,1,-1)$ then $P$ is empty. In general it is not easy to find the exact dimension of $P$, but the bound $d$ is sufficient for our purpose.

\medskip
\noindent
Barvinok's Algorithm for computing $E(x;A,b)\big|_{x_i=1}$, where $(A,b)$ specifies a bounded polytope $P$ with fixed dimension $d$. 

\begin{enumerate}
\item By using Brion's theorem, we can write
  $$ E(x;A,b)= \sum_i  x^{\nu^{(i)}} E(x; A^{(i)},0),$$
  where the summands correspond to vertex cones and the sum ranges over all vertices $\nu^{(i)}$ of $P$.

\item A fixed dimensional rational cone can be signed-decomposed into simplicial cones (see e.g. \cite{Aurenhammer,Lee}).

\item Barvinok made the \emph{key observation} that a simplicial cone can be decomposed into polynomially many unimodular cones.

\item The generating function for a unimodular cone has a monomial numerator. Thus $E(x;A,b)$ can be written as a sum of polynomially many simple rational functions, for which we refer to
Elliott-rational functions with monomial numerators.

\item Finally, take limits when $x_i \to 1$ for all $i$, as we shall discuss in Section \ref{s-slackvariables}.
\end{enumerate}

The above outlined algorithm follows that of \cite{LattE}. Note that in geometry, a polytope is usually defined by inequalities (or together with some equalities), which is inconsistent with our definition by equalities. This problem can be easily solved for a simplex, and for the general situation Cook et al. \cite{Cook} and Dyer \cite{Dyer} showed that the problem of counting integral points in a rational polytope can be reduced in polynomial time to counting integral points in
an integral simplex assuming that the dimension is fixed.

\medskip
\noindent
The XinPF Algorithm computes the constant term of the Elliott rational function $\mathcal E$ as in \eqref{e-Introduction} regarded as an element in a specified field $K$ of iterated Laurent series.
\begin{enumerate}
\item Take $O_0$ as a sum with the single term $E$, and compute $O_{i+1}$ from $O_{i}$ for $i=0,1,\dots, r-1$ as follows. Then $O_{r}$ will be the output.

      \item For each summand $O_{ij}$ in $O_i$, choose a variable $\lambda$, compute $\CT_\lambda O_{ij}$ as in the next step, and collect the results into $O_{i+1}$.

      \item When computing $\CT_{\lambda} O_{ij}$, we first compute the partial fraction decomposition of $O_{ij}$ with respect to $\lambda$ and then read off the constant term (see Section \ref{s-XinPF}). The result is a sum of Elliott-rational functions in $K$, and we do not combine the sum to a single rational function.
\end{enumerate}

Let us see a simple example. Suppose we want to compute the number $N$ of lattice points in the $5\times 5$ square in the plane
with vertices $(0,0),(5,0),(0,5),(5,5)$. From the geometric side, Brion's theorem gives 4 pointed unimodular cones directly, and hence the sum of 4 simple rational
functions
$$\frac{1}{(1-x)(1-y)} + \frac{x^5}{(1-x^{-1})(1-y)} + \frac{y^5}{(1-x)(1-y^{-1})} +\frac{x^5y^5}{(1-x^{-1})(1-y^{-1})}. $$
For instance, the second term corresponds to the vertex cone with vertex $(5,0)$ (thus with numerator $x^5$), and generators $(-1,0)$ and $(0,1)$ (corresponding to $x^{-1}$ and $y$ in the denominator).
Now taking limit as $x,y\to 1$ gives $N=36.$

If we use partition analysis, the system is given by $\{a_1\le 5, a_2\le 5, a_i\ge 0\}$, which should be transformed to the equalities
$\{a_1+a_3 =5, a_2+a_4=5, a_i\ge 0\}$ with added variables $a_3,a_4$. Thus we have
\begin{align*}
  N = \sum_{a_1+a_3=5,a_2+a_4=5,a_i\ge 0} 1 &=\sum_{a_i\ge 0} \CT_{\lambda_1,\lambda_2} \lambda_1^{a_1+a_3-5} \lambda_2^{a_2+a_4-5} \\
    &=\CT_{\lambda_1,\lambda_2} \frac{\lambda_1^{-5}\lambda_2^{-5}}{(1-\lambda_1)^2(1-\lambda_2)^2} \\
    &=\CT_{\lambda_1} \frac{\lambda_1^{-5}}{(1-\lambda_1)^2} \times \CT_{\lambda_2} \frac{\lambda_2^{-5}}{(1-\lambda_2)^2} =36.
\end{align*}

\medskip
\noindent
\textbf{The comparison}

\medskip
From the above description, we see that the two algorithms are very different.
\begin{enumerate}
      \item The basic elements of Barvinok's algorithm are rational cones while that of the XinPF algorithm are Elliott rational functions, which include a larger class of objects.

      \item By performing computations on rational cones, Barvinok's algorithm avoids the convergence problem. The XinPF algorithm settles the convergence problem by introducing the field of iterated Laurent series as a framework.

\item Barvinok's algorithm is polynomial while the XinPF algorithm is not.

\item The XinPF algorithm can handle polynomial numerators while Barvinok' algorithm can only handle monomial numerators.

\item The application of partial fraction decompositions has much more flexibility than rational cone decompositions under our framework.
\end{enumerate}

Although Barvinok's algorithm is polynomial, the application of Brion's theorem may be very costly if the number of vertices is large. Indeed, the author's \texttt{Ell2} package has better performance than the \texttt{LattE} package in many situations, such as the Sdd5 problem as we will discuss in Section \ref{s-sdd}. Because of the freedom of the author's algorithm,
we conclude that a better strategy is to embed some of Barvinok's ideas into the author's framework. This leads to a polynomial time algorithm for MacMahon's partition analysis in Section 3.

\section{A polynomial time algorithm for MacMahon's partition analysis in theory}
Our objective in this section is to give a polynomial time algorithm for the following core problem in MacMahon's partition analysis.

\subsection{The core problem and the polynomial time algorithm}
We need some notation and assumptions. Let $U$ be a fixed positive integer. Denote by $K$ the field of iterated Laurent series
specified by
the sequence of variables $(x_1,x_2,\dots,x_m)$. See Section \ref{s-XinPF} for a brief introduction. Here just keep in mind that every monomial $M\ne 1$ satisfies either $M<1$ or $M>1$. We will frequently rewrite our rational functions using the fact that
$1/(1-M)= -M^{-1}/(1-M^{-1})$.

i) An Elliott rational function $E=E(x_1,x_2,\dots, x_m)$ will be written as
\begin{align}
  \label{e-E-heart}
E = \frac{L}{(1-M_1(x)\Lambda^{c_1})(1-M_2(x)\Lambda^{c_2})\cdots (1-M_n(x) \Lambda^{c_n})},
\end{align}
where $L$ is a linear sum of $U$ monomials in $x$ and each $M_i(x)$ is a monomial in $x$ but free of $\Lambda$.
Moreover, we require $M_i(x)\Lambda^{c_i}<1$ in $K$ for all $i$.

ii) The integer matrix $A_{r\times n}=(c_1,\dots, c_n)$ has full rank $r$ ($\le n$).

iii) We fix $d=n-r$, and call it the dimension of the core problem.

\begin{prob}[Core Problem]\label{prob-heart}
\emph{Given} an Elliott rational function $E$ as in \eqref{e-E-heart},
  and a set $\Lambda=\{\lambda_1,\dots, \lambda_r \}\subseteq \{x_1,x_2,\dots,x_m \}$ of variables to be eliminated, \emph{represent} the constant term in $\Lambda$ of $E$ when working in $K$ as follows.
$$ \CT_{\lambda_1,\dots, \lambda_r} E= \text{ a short sum of simple rational functions free of the $\lambda$'s}. $$
Here a \emph{short} sum is a sum with polynomially many terms, and a \emph{simple} rational function refers to rational functions with no more than $C(d)U$ monomials in their numerators, where $C(d)$ is a constant depending only on $d$.
\end{prob}

Note that there is no convergence problem: by the structure of $K$, the constant term of $E$ is still an iterated Laurent series.

\begin{thm}
For a fixed number $U$ and dimension $d$, the core problem can be solved in polynomial time.% in the sense of Barvinok's.
\end{thm}
\begin{proof}
By linearity, it is sufficient to consider the $U=1$ case. Assume $L=\Lambda^{-b}$.

It is clear from direct series expansion that
$$ \CT_\Lambda E= \sum_{A\alpha =b, \alpha\in \mathbb{N}^n} M_1(x)^{\alpha_1} M_2(x)^{\alpha_2}\cdots M_n(x)^{\alpha_n}.$$
This is essentially the computation of
$E(y;A,b)\big|_{y_i=M_i(x),i=1,2,\dots, n}$ for the polyhedron $P$ specified by $A$ and $b$.

If $P$ is bounded, then by Barvinok's Algorithm, but without the last step, we have a short sum representation of $E$ in the $y$'s.

For general $P$, we can also obtain a short sum representation of $E$ in the $y$'s.
We can introduce a new variable $q$ and use the formula
$$ \CT_\Lambda E= [q^1] \; \CT_\Lambda \frac{1}{(1-M_1(x)\Lambda^{c_1})(1-M_2(x)\Lambda^{c_2})\cdots (1-M_n(x) \Lambda^{c_n}) (1-q \Lambda^{-b})} .$$
Here we work in the field $K((q))$ of Laurent series in $q$ with coefficients in $K$.
For a similar reason, this corresponds to computing $[q^1] E(y,q;(A,-b))\big|_{y_i=M_i(x),i=1,2,\dots, n}$ for the rational cone specified by $(A,-b)$.
Still by  Barvinok's Algorithm, steps 2-4, we have a short sum representation in the $y$'s and $q$. Since each summand corresponds to a unimodular cone of dimension no more than $d+1$, it can be written, up to a constant scalar (in $q$), in the following form.
$$T =\frac{1}{(1-y^{\beta_1}q^{a_1}) \cdots (1-y^{\beta_k} q^{a_k}) (1-y^{\beta_{k+1}} q^{-a_{k+1}}) \cdots (1-y^{\beta_{k'}} q^{-a_{k'}}) },$$
where $a_i$ are all positive integers and $k'\le d+1$. In $K((q))$ it is easy to obtain:
\begin{align*}
[q^1]\;  T=\left\{
                                                                                                                                 \begin{array}{ll}
                                                                                                                                   0, & \text{ if }a_{k+1}+\cdots +a_{k'}>1 \\
                                                                                                                                  -y^{-\beta_{k+1}}, & \text{ if } k'=k+1 \ \& \ a_{k+1}=1 \\
\sum_{i} y^{\beta_i} \chi(a_i=1), & \text{ otherwise } k'=k,
\end{array}
\right.
\end{align*}
where $\chi(S)$ is 1 if the statement $S$ is true and $0$ if otherwise.

Applying the above formula to each summand gives a short sum representation of $E$ in the $y$'s.

Setting
$y_i=M_i(x) z_i$ with $z_i$'s the slack variables, we are left with taking the limit when $z_i\to 1$ for all $i$.
The polynomial time result will follow if computing the limit takes polynomial time. This will be shown in Section \ref{s-slackvariables}.
\end{proof}
The number $U$ plays no role in our proof, but it is significant in practice, as will be seen later in Remark \ref{rem-slack2}.

\subsection{Dispelling the slack variables \label{s-slackvariables}}
The algorithm for dispelling the slack variables $z_i$ for all $i$ could be thought of  as an independent subject. Although the idea works for a more general class of rational functions, we concentrate on the class of Elliott-rational functions that arise naturally from MacMahon's partition analysis.

\begin{prob}\label{prob-slack}
\emph{Given} an Elliott-rational function $Q(x_1,\dots, x_m;z_1,\dots,z_n)$ written as a short sum:
$$ Q= \sum_i  \frac{L_i(x;z)}{(1-M_{i1}z^{B_{i1}})(1-M_{i2}z^{B_{i2}})\cdots (1-M_{id}z^{B_{id}})},$$
where the $M_{ij}$'s are monomials in $x$, the $L_i$'s are Laurent polynomials with at most $U$ monomials, \emph{represent} $Q(x_1,\dots, x_m;1,\dots,1)$,
%=\lim_{z_i=1,i=1,\dot,n}Q$,
which is known in advance to be well defined, as a short sum of simple rational functions:
$$Q(x_1,\dots, x_m;1,\dots,1)= \sum \frac{\text{linear combination of $UC(d)$ monomials}}{\text{product of binomials}}. $$
\end{prob}

It is not clear how to dispel $z_i$ even when $m=0$: i) combining the terms to a single rational function will get a monster numerator that can not be handled by the computer; ii) direct substitution of $z_i=1$ for all $i$ does not work for possible denominator factors like $1-z_1z_2$ in some of the terms.

The algorithm we present here is inspired by the idea from computational geometry, which is
better illustrated by the following simple example. We have
$$ \lim_{z\to 1} \frac{1-z^n}{1-z} = \lim_{z\to 1} \sum_{i=0}^{n-1} z^i =\sum_{i=0}^{n-1} 1 =n.$$
 Computationally this is inefficient for large values of $n$. Of course there are many other methods, but the following computation extends for Problem \ref{prob-slack}, provided that we know the existence of the limit. We have (by making the substitution $z\to 1+s$)
\begin{align*}
  \lim_{z\to 1} \frac{1}{1-z}-\frac{z^n}{1-z} &= \lim_{s\to 0} \frac{1}{-s} + \frac{(1+s)^n}{s} = \CT_s \frac{1}{-s} + \CT_s \frac{(1+s)^n}{s} =n.
\end{align*}

\medskip
\noindent
The DispelSlack Algorithm for Problem \ref{prob-slack} consists of two major steps.

\begin{enumerate}
  \item \emph{Reduce the number of slack variables to $1$}. Calculating $Q(x;1,1,\dots, 1)$ is equivalent to evaluating the limit as $z_i$ goes to $1$ for all $i$. Our first step is to reduce the number of slack variables to $1$. This is done by finding a suitable integer vector $\lambda$ and making the substitution $z_i\to t^{\lambda_i}$. In order to do so, $\lambda$ must be picked such that there is no zero denominator in any term, i.e., for every $i$ and $j$ we can not have both $M_{ij}=1$ and the inner product $\langle \lambda, B_{ij}\rangle=0$. Barvinok showed such $\lambda$ can be picked in polynomial time by choosing points on the moment curve. De Loera et al. \cite{LattE} suggested using random vectors to avoid large integer entries.

\item  \emph{Use the Laurent series expansion}. Now we need $Q(x; t^{\lambda_1},\dots, t^{\lambda_n})\Big|_{t=1}$, where
$$Q(x;t^{\lambda_1},\dots, t^{\lambda_n})= \sum_i \frac{L_i(x; t^{\lambda_1},\dots, t^{\lambda_n})}{\prod (1-t^{\langle\lambda, B_{ij}\rangle}M_{ij})}.$$

An obvious way is to make the substitution $t=1+s$. Then we have
$$ Q%(x;t^{\lambda_1},\dots, t^{\lambda_n})
\Big|_{t=1}
=Q(x;(1+s)^{\lambda_1},\dots, (1+s)^{\lambda_n})\Big|_{s=0}=\CT_s Q(x;(1+s)^{\lambda_1},\dots, (1+s)^{\lambda_n}),$$
where we are taking the constant term of a Laurent series in $s$. The linearity of the constant term operator allows us to compute separately:
$$Q(x;1,\dots, 1)= \sum_i  \CT_s \frac{L_i(x; (1+s)^{\lambda_1},\dots, (1+s)^{\lambda_n})}{\prod (1-(1+s)^{\langle\lambda, B_{ij}\rangle}M_{ij})}.$$

The substitution $t=1+s$ seems natural and works fine for the $m=0$ case, where we only need do polynomial division in $\mathbb{Q}[s]$. For $m\ge 1$, we find it better to make the exponential substitution $t=e^s$, which leads to
$$Q(x;1,\dots, 1)= \sum_i  \CT_s \frac{L_i(x; e^{\lambda_1s},\dots, e^{\lambda_n s})}{\prod (1-e^{s\langle\lambda, B_{ij}\rangle}M_{ij})}.$$
Now applying the following proposition to each summand gives the desired result.
\end{enumerate}

\begin{prop}\label{p-slacksingle}
 Let $L(x;z)$ be a Laurent polynomial with $U$ monomials, $M_j$ monomials in $x$ and $b_j$ integers for $1\le j \le d$. The constant term
$$\CT_s \frac{L(x; e^{\lambda_1s},\dots, e^{\lambda_n s})}{\prod_{j=1}^d (1-e^{b_js}M_{j})}$$
can be efficiently computed as a sum of at most $\binom{d+1}{\lceil d/2\rceil}$ rational functions, where each has at most $U C(d)$ monomials in the numerator.
\end{prop}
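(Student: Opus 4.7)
The plan is to extract the constant term by separating the polar part of the expression at $s=0$ and expanding the remaining analytic factor via the general Leibniz rule.

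First, partition the denominator factor indices into $J_0=\{j:M_j=1\}$ and $J_1=\{j:M_j\neq 1\}$, and set $k=|J_0|$. The choice of $\lambda$ in the preceding subsection guarantees $b_j\neq 0$ for every $j\in J_0$, so each such factor $1-e^{b_j s}$ has a simple zero at $s=0$, while each factor with $j\in J_1$ is a unit at $s=0$ of value $1-M_j$. Consequently the whole expression has a pole of order exactly $k$ at $s=0$.

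Next, rewrite the expression as $s^{-k}g(x,s)$, where
\[
g(x,s)=L(x;e^{\lambda_1 s},\ldots,e^{\lambda_n s})\prod_{j\in J_0}\frac{s}{1-e^{b_j s}}\prod_{j\in J_1}\frac{1}{1-e^{b_j s}M_j}
\]
is a product of $d+1$ factors, each analytic at $s=0$. Then
\[
\CT_s \frac{L(x;e^{\lambda s})}{\prod_{j=1}^d (1-e^{b_j s}M_j)}=[s^k]g(x,s)=\frac{1}{k!}\,\partial_s^k g(x,s)\big|_{s=0},
\]
and the general Leibniz rule expresses this as a sum over compositions $(k_0,k_1,\ldots,k_d)$ of $k$ into $d+1$ nonnegative parts, of multinomial coefficients times products of low-order Taylor coefficients of the individual factors. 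The Taylor coefficients of $\frac{s}{1-e^{b_j s}}$ are (up to sign) Bernoulli numbers times powers of $b_j$; those of $\frac{1}{1-e^{b_j s}M_j}$ are obtained by repeatedly differentiating the geometric series and have the form $\frac{P(b_j,M_j)}{(1-M_j)^{m+1}}$; and the Taylor coefficients of $L(x;e^{\lambda s})$ are polynomials in the $\lambda_i$'s times Laurent monomials in $x$. Each Leibniz summand is therefore a ``simple'' rational function in $x$ of the shape promised, and the algorithm is polynomial in $d$ when the orders of differentiation are bounded polynomially.

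The hard part is to sharpen the naive Leibniz count of $\binom{k+d}{d}$ summands down to the stated bound $\binom{d+1}{\lceil d/2\rceil}$. I expect this to come either from grouping identical summands together, or from an alternative derivation via a chain-decomposition argument: the central binomial coefficient $\binom{d+1}{\lceil d/2\rceil}$ is precisely the Sperner bound for the Boolean lattice $B_{d+1}$, which suggests that the irreducible ``shapes'' of rational functions in the sum are indexed by a maximum antichain in a natural poset on the $d+1$ factors (the $d$ denominator factors of $g$ together with the numerator). I would verify the bound by direct computation in the small cases $d\le 3$, and then attempt an induction on $d$ that peels off one factor, mimicking the Pascal recursion $\binom{d+1}{\lceil d/2\rceil}=\binom{d}{\lceil d/2\rceil}+\binom{d}{\lceil d/2\rceil-1}$ by splitting on whether the peeled factor lies in $J_0$ or in $J_1$.
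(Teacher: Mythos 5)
Your setup is correct and matches the paper's: you isolate the order-$k$ pole at $s=0$ (with $k=|J_0|$ the number of factors having $M_j=1$) by multiplying through by $s^k$, and you observe that the resulting $g(x,s)$ is analytic at $s=0$ and can be handled by Taylor expansion. However, you stop short of the bound and explicitly leave the count as the naive Leibniz number $\binom{k+d}{d}$, which is far larger than the claimed $\binom{d+1}{\lceil d/2\rceil}$. Your guesses about where the sharper bound might come from (Sperner's theorem, antichains in $B_{d+1}$, chain decompositions) are a red herring: no such machinery is needed, and none appears in the paper.

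The missing step is simple but essential: do \emph{not} treat each of the $k$ factors $\frac{s}{1-e^{b_j s}}$ for $j\in J_0$ as a separate factor in the Leibniz/Cauchy product. Instead, multiply them all together first. Each is a \emph{univariate} power series in $s$ with Bernoulli-number coefficients, so the product
\[
\prod_{j\in J_0}\frac{s}{1-e^{b_j s}}=\sum_{n\ge 0}c'_n s^n
\]
costs only univariate polynomial multiplication (a cheap precomputation) and collapses the $k$ singular factors into a single power-series factor. Now $g(x,s)$ is a Cauchy product of just $d-k+2$ factors: the expanded numerator $L$, this single combined $J_0$ series, and the $d-k$ factors $\frac{1}{1-e^{b_j s}M_j}$ with $j\in J_1$ (each of which contributes coefficients of the form $c_n(M_j)b_j^n$, as you correctly note). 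Extracting $[s^k]$ from a product of $d-k+2$ power series gives a sum over compositions of $k$ into $d-k+2$ nonnegative parts, i.e.\ exactly $\binom{d+1}{k}$ terms. Since $\binom{d+1}{k}\le\binom{d+1}{\lceil d/2\rceil}$ for all $k$, the stated bound follows immediately, with no combinatorial optimization of any kind. Without this collapsing step your count remains $\binom{k+d}{d}$, which is not polynomial in the right parameters and does not prove the proposition.
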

For fixed $d$, we only need the following three formulas and do polynomial multiplications if using the exponential substitution:
\begin{align}
  e^s &= \sum_{n=0}^d \frac{1}{n!} s^n +o(s^d) \label{e-exp}\\
  \frac{s}{1-e^{s}} &= \sum_{n=0}^d- \frac{\mathcal B_n}{n!} s^n+o(s^{d})=-1+{\frac {1}{2}}s-{\frac {1}{12}}{s}^{2}+{\frac {1}{720}}{s}^{4}
  %-{\frac {1}{30240}}{s}^{6}
+\cdots+o(s^d), \label{e-bernulli}\\
\frac{1}{1-e^{s}M}  &= \frac{(1-M)^{-1}}{ (1- \frac{M}{1-M}(e^{s}-1))}
= \sum_{n\ge 0 } \frac{M^n(e^{s}-1)^n}{(1-M)^{n+1}} = \sum_{n= 0}^d c_n(M) s^n +o(s^d). \label{e-esM}
\end{align}
The $\mathcal B_n$ are the well-known Bernoulli numbers and $c_n(M)$ has denominator $(1-M)^{n+1}$. When programming, the $\mathcal{B}_n$ and $c_n(M)$ can be stored in advance for all $n\le d$.

\begin{proof}[Proof of Proposition \ref{p-slacksingle}]
By rearranging the factors of the denominator, we may assume that $M_1=M_2=\cdots =M_r=1$ and all the other $M_j$ are not $1$ for some $r$.
Now by \eqref{e-bernulli} we can use $r-1$ multiplications to obtain
$$ \frac{s^r}{\prod_{j=1}^r (1-e^{b_js}M_{j})} =\prod_{j=1}^r \sum_{n\ge 0}- \frac{\mathcal B_nb_j^{n-1}}{n!} s^n= \sum_{n=0}^r c'_n s^n + o(s^r).$$
This is a power series in $s$.
It follows that
$$ \CT_s \frac{L(x; e^{\lambda_1s},\dots, e^{\lambda_n s})}{\prod_{j=1}^d (1-e^{b_js}M_{j})} =[s^r] \frac{L(x; e^{\lambda_1s},\dots, e^{\lambda_n s})}{\prod_{j=r+1}^d (1-e^{b_js}M_{j})}\cdot \sum_{n=0}^r c'_n s^n. $$
So we are indeed taking the coefficient of $s^r$ in a product of power series.

By using \eqref{e-exp}, we clearly have the expansion
$$L(x; e^{\lambda_1s},\dots, e^{\lambda_n s})= \sum_{n_0=0}^r \ell_{n_0}(x) s^{n_0} +o(s^r),$$
and $\ell_{n_0}$ has at most $U$ monomials. Therefore we have, by \eqref{e-esM},
\begin{align*}
  s^r \frac{L(x; e^{\lambda_1s},\dots, e^{\lambda_n s})}{\prod_{j=1}^d (1-e^{b_js}M_{j})}
&= \sum_{n_0= 0}^r \ell_{n_0}(x) s^{n_0} \cdot \sum_{n_1= 0}^r c'_{n_1} s^{n_1} \prod_{j=r+1}^d \sum_{n_j= 0}^r c_{n_j}(M_j)b_j^{n_j} +o(s^r)\\
&= \sum_{n= 0}^r s^n \sum_{n_0+n_1+n_{r+1}+\cdots +n_d =n} \ell_{n_0}(x) c_{n_1}' \prod_{j=r+1}^d c_{n_j}(M_j)b_j^{n_j} +o(s^r).
\end{align*}

It follows that
the desired constant term is given by
\begin{align}
  \label{e-slack-monster}
  \sum_{n_0+n_1+n_{r+1}+\cdots +n_d =r}\ell_{n_0}(x) c_{n_1}' \prod_{j=r+1}^d c_{n_j}(M_j)b_j^n.
\end{align}
The number of terms is equal to the number of nonnegative integer solutions of $n_0+n_1+n_{r+1}+\cdots +n_d =r$, which is easily shown to be
$\binom{d+1}{r}\le \binom{d+1}{\lceil d/2 \rceil}$.

Finally for any single term $\ell_{n_0}(x) c_{n_1}' \prod_{j=r+1}^d c_{n_j}(M_j)b_j^n$, the denominator is a product of binomials and the numerator has at most
$U C(d)$ monomials with
$$C(d) \le (\max_{j\le d} \{\text{ the number of monomials in } c_j(M)\})^d.$$
The proposition then follows.
\end{proof}

\begin{rem}\label{rem-slack1}
In practice, we simply compute the constant term in Proposition \ref{p-slacksingle} by $d$ (or $d-1$ if $U=1$) multiplications of elements
in $\mathbb{Q}(x_1,\dots,x_m)[s]$ with degree no more than $r$. This gives the same complexity since the elements we multiply are of fixed type coming from formulas $($\ref{e-bernulli},\ref{e-esM}$)$. This is the first choice when $m\le 1$ since the size of the numerator has a natural bound by degree. But for $m>1$ the size of the numerator maybe too large for computational purposes. We do not develop an algorithm using \eqref{e-slack-monster}, because we have not found significant advantages for the $m\ge 2$ cases.
The bound $C(d)$ and \eqref{e-slack-monster} are only used for complexity.
%in the theory of polynomial time algorithm.
\end{rem}

\begin{rem}\label{rem-slackp}
Let $p>d$ be a prime number that does not divide the denominator of each coefficient of $Q$ (or $E$ in Problem \ref{prob-heart}). Then we can compute $Q(x_1,\dots, x_m;1,\dots,1) \pmod{p}$ by performing modular arithmetic with respect to $p$ at each step. The only formulas we need to modify are
(\ref{e-exp}--\ref{e-esM}).
\end{rem}

\begin{rem}\label{rem-slack2}
The running time $R(U)$ of DispelSlack does not vary much when the number $U$ changes, especially when $m=0,1$. As described in Remark \ref{rem-slack1}, the ratio $R(U)/R(1)$ is approximately $d/(d-1)$ unless $U$ is too large. The ratio is about the same for the general algorithm for MacMahon's partition analysis, since the DispelSlack step takes most of the running time.
%This means that when computing the constant term of an Elliott-rational function with (for instance) $U=1000$  monomials in its numerator, it is not a good idea to split the problem into 1000 similar subproblems with monomial numerator. An algorithm dealing with polynomial numerators is more desirable in practice.
\end{rem}
From Remark \ref{rem-slack2}, we conclude that a good algorithm for Problem \ref{prob-heart} should be able to handle polynomial numerators ``uniformly": If we can only deal with monomial numerators, then the running time $R(U)$ will be roughly $U\cdot R(1)$ since we have to split the problem into $U$ similar problems. This is crucial in practice when $U$ is large, say 100. This is one of the reasons why we only claim a polynomial time algorithm in theory but without an implementation.

\begin{rem}
In Problem \ref{prob-slack} we can also detect (when not given) the existence of\\
 $Q(x_1,\dots, x_m,1,\dots,1)$ without affecting the complexity of the problem. This is done by using the same idea to verify
the equalities
$$[s^{k}] Q(x;e^{\lambda_1 s},\dots, e^{\lambda_n s})=0, \text{ \emph{for} } k=-d, -d+1,\dots, -1.$$
\end{rem}

The only problem for this approach is the large integer problem. This is crucial in practice but do not affect the complexity for fixed dimension $d$. It seems unavoidable that some of the $b_{ij}=\langle \lambda, B_{ij}\rangle$ might be large. This results in huge numbers (but polynomial in the input) since our formula involves $b_{ij}^n$ for $n\le d$. This problem is avoidable by choosing a reasonably large prime number $p$ and using Remark \ref{rem-slackp}. Then we can construct the final output by some other known information. This is indeed the case since our problems are usually combinatorial and the final output is nice in some sense. The $m=0$ case of the problem usually computes the number of lattice points in a rational convex polytope. There are methods to estimate this number. For the $m=1$ case we usually compute the Ehrhart series as in Section \ref{s-Ehrhart}. Such generating functions always have known denominators and their numerators have integer coefficients which are not too large. Thus we can do the constant term extraction modulo $p$. If necessary, we can do the computation several times using different large primes and then use the Chinese remainder theorem to construct the final output.

For the above reasons, it is necessary to make progress on an  algebra-based algorithm which can handle polynomial numerators. This is why we develop the Euclid style algorithm in Section 4.

\section{A Euclid style algorithm for MacMahon's partition analysis \label{s-Euclid}}
The Euclid style algorithm is developed in the framework of MacMahon's partition analysis. It is an elimination-based algorithm, so the basic problem is to take constant terms in a single variable. For this problem, we provide an elementary approach like the Euclidean algorithm for greatest common divisors. With the help of some geometric ideas, we are able to resolve several bottlenecks in the \texttt{Ell2} package.

\subsection{Brief introduction to the XinPF Algorithm\label{s-XinPF}}
The Euclid style algorithm is along the line of the author's partial fraction algorithm, so it is time to explain briefly the field of iterated Laurent series. We use the list $\texttt{vars}=[x_1,x_2,\dots, x_m]$ to define the working field $K=\mathbb{Q}((x_m))((x_{m-1}))\cdots ((x_1))$. See \cite{iterate} for a detailed explanation. Here we only need the fact that every monomial $M\ne 1$ is comparable with $1$ in $K$ by the following rule:
find the ``smallest" variable $x_j$ appearing in $M$, i.e, $\deg_{x_i} M=0$ for all $i<j$. If $\deg_{x_j} M>0$ then we say $M$ is small, denoted $M<1$, otherwise we say $M$ is large, denoted $M>1$. Thus we can determine which of the following two series expansion holds in $K$.
\begin{align*}
  \frac{1}{1-M} =\left\{
                   \begin{array}{ll}
                    \displaystyle \sum_{k\ge 0} M^k, & \text{ if } M<1; \\
                    \displaystyle \frac{1}{-M(1-1/M)}=  \sum_{k\ge 0} - \frac{1}{M^{k+1}}, & \text{ if } M>1.
                   \end{array}
                 \right.
\end{align*}
When expanding $E$ as a series in $K$, we usually rewrite $E$ in its \emph{proper form}:
\begin{align}\label{e-properform}
  E = \frac{L}{(1-M_1)(1-M_2)\cdots (1-M_n)},  \tag{proper form}
\end{align}
where $L$ is a Laurent polynomial and $M_i<1$ for all $i$. Note that the proper form of $E$ is not unique. For instance
$1/(1-x)=(1+x)/(1-x^2)$ are both proper forms.

\medskip
Now we can sketch how the XinPF Algorithm computes $\CT_\lambda E$ in $K$ for $\lambda=x_{i_0}$.
In order to do so, we need to write $E$ in the following form.
\begin{align}
  \label{e-E-xform}
E= \frac{L(\lambda)}{\prod_{i=1}^n (1-u_i \lambda^{a_i})},  \qquad\text{ (not in proper form)}
\end{align}
where $L(\lambda)$ is a Laurent polynomial, $u_i$ are free of $\lambda$ and $a_i$ are positive integers for all $i$. The algorithm mainly relies on the following known results.
\begin{prop}\label{p-partialfraction}
Suppose the partial fraction decomposition of $E$ is given by
\begin{align}
  \label{e-E-parfrac}
E= P(\lambda)+ \frac{p(\lambda)}{\lambda^k} +\sum_{i=1}^n \frac{A_i(\lambda)}{1-u_i \lambda^{a_i}},
\end{align}
where the $u_i$'s are free of $\lambda$, $P(\lambda),p(\lambda),$ and the $ A_i(\lambda)$'s are all polynomials, $\deg p(\lambda)<k$, and
$\deg A_i(\lambda)<a_i$ for all $i$.
Then we have
$$\CT_\lambda E = P(0) + \sum_{u_i \lambda^{a_i} <1} A_i(0).$$
\end{prop}

The Proposition holds since, if written in proper form, we shall have
$$ \frac{A_i(\lambda)}{1-u_i \lambda^{a_i}} =\left\{
                   \begin{array}{ll}
                    \displaystyle \frac{A_i(\lambda)}{1-u_i \lambda^{a_i}} \ \mathop{\longrightarrow}\limits^{\CT_\lambda} A_i(0), & \text{ if } u_i \lambda^{a_i} <1; \\
                    \displaystyle \frac{A_i(\lambda)}{-u_i\lambda^{a_i} (1-\frac{1}{u_i \lambda^{a_i}})}=\frac{\lambda^{-a_i}A_i(\lambda)}{-u_i(1-\frac{1}{u_i \lambda^{a_i}})}
                    \ \mathop{\longrightarrow}\limits^{\CT_\lambda} 0, & \text{ if } u_i \lambda^{a_i}>1.
                   \end{array}
                 \right.
 $$

\begin{thm}\label{t-parfrac-single}
Let $E$ be as in \eqref{e-E-parfrac}. Then $A_s(\lambda)$ is uniquely characterized by
\begin{align}\label{e-As}
 \left\{
   \begin{array}{l}
      A_s(\lambda) \equiv E(1-u_s\lambda^{a_s})  \pmod{\langle 1-u_s\lambda^{a_s} \rangle} , \\
     \deg_\lambda A_s <a_s,   \end{array}
 \right.
 \end{align}
where  $\langle 1-u\lambda^a \rangle$ denotes the ideal generated by $1-u\lambda^a$.
\end{thm}
To compute $A_s(\lambda)$ explicitly by polynomial operations, we also need the following formula to invert $1-u_i \lambda^{a_i}$ for all $i\ne s$.
$$ \frac{1}{\lambda^{b} -v} \equiv \frac{1}{1-u^b v^{a}} \cdot \frac{1- (u \lambda^a)^b }{1-u \lambda^a}   \pmod{\langle 1-u \lambda^{a} \rangle}  \text{ if } u^bv^a \ne 1, $$
where we allow $v=0$ to handle the possible factor $\lambda^{-b}$ of $L$.

The above ideas have been carried out by the Maple package \texttt{Ell2}. See \cite{iterate} for further technical treatment. The package works fine for many practical problems but it may break down for some random problems. There are three bottlenecks in this algorithm.

\begin{enumerate}
 \item The computation of $P(\lambda)$ may be expensive since by the polynomial division algorithm we have to expand the denominator.

 \item There is no good way to deal with the case of non-coprime $1-u_i \lambda^{a_i}$ and $1-u_s \lambda^{a_s}$. This problem is called the multiple roots problem.

  \item The explicit formula of $A_s(\lambda)$ may have too many monomials. The number maybe as large as $(\text{the number of monomials in the numerator of } E) \times a_s^{n-1}$.
\end{enumerate}

The proposed Euclid style algorithm resolves these three bottlenecks. We outline the algorithm as follows.

\medskip
\noindent
CTEuclid Algorithm computes $\CT_\Lambda E$ in $K$ where $\Lambda=\{\lambda_1,\dots,\lambda_r\} \subseteq \{x_1,\dots, x_m\}$.

\begin{enumerate}
\item[1] Write $E$ in its proper form as in \eqref{e-properform}. Set
$$ O_0 = \frac{L}{(1-z_1M_1)(1-z_2M_2)\cdots (1-z_nM_n)},$$
and set $K'=K((z_n))\cdots ((z_1))$.

\item[2.1] Compute $O_{i+1}$ from $O_{i}$ for $i=0,1,\dots, r-1$ as follows.

     \item[2.2] For each summand $O_{ij}$ in $O_i$, choose a variable $\lambda$, compute $\CT_\lambda O_{ij}$ in $K'$ as in the next step, and collect the results into $O_{i+1}$.

     \item[2.3] Compute $\CT_{\lambda} O_{ij}$ in $K'$ by Theorem \ref{t-P(0)} and by Proposition \ref{p-recursion}.

\item[3] Dispel the slack variables $z_i$ for all $i$ from $O_r$ and give the output.
\end{enumerate}
In Step 1, we set the new working field $K'$ specified by $\texttt{vars}=[z_1,\dots, z_n, x_1,\dots, x_m]$.
By direct series expansion, it clearly holds that
      $$\CT_{\Lambda} E =\left( \CT_{\Lambda} O_0 \right)\Big|_{z_i=1,\; i=1,2,\dots, n}.$$
The adding of the slack variables $z_1,\dots, z_n$ avoids the multiple roots problem. Together with Step 3, we resolve the multiple roots bottleneck.

In Step 2, we are eliminating all the variables $\lambda_i$ to obtain $O_r= \CT_{\Lambda} O_0.$ Steps 2.1-2.2 are similar to that in the XinPF Algorithm.
The other two bottlenecks are resolved in Step 2.3, or in the next two subsections, where we avoid the explicit formula of $P(x)$  by Theorem \ref{t-P(0)}, and give a recursion for $A_s(x)$ by Proposition \ref{p-recursion}.

\subsection{A reduction to the contribution of a single factor \label{s-red-single}}
Now the major problem is to compute $\CT_\lambda E$ for $E$ as in \eqref{e-E-xform} in $K$.
Assume we had not introduced the slack variables in Step 1.
Our task is to compute
$$ \mathcal{A}_{ 1-u_s \lambda^{a_s}}\ E := A_s(0), $$
where $A_s(\lambda)$ is characterized by \eqref{e-As}. This definition will also be used for $1-u \lambda^a$ in the following two cases.

i) if
$1-u \lambda^a$ is coprime to the denominator of $E$ then $\mathcal{A}_{ 1-u\lambda^{a}}\ E =0$;

ii) if $u\lambda^a=u_s \lambda^{a_s}$ but $1-u \lambda^a$ is not coprime to $1-u_i \lambda^{a_i}$ for some $i\ne s$, then $A_s(\lambda)$ does not exist, and we
define $\mathcal{A}_{ 1-u\lambda^{a}}\ E $ to be ``does not apply".

In practice, we might be lucky enough to never meet case ii). Then $O_r$ is already the output. In the general situation or a complicated problem, we need to introduce
the   slack variables as in Step 1, so that case ii) will never appear in any computation of $\CT_\lambda O_{ij}$ in $K'$.

Thus in the new notation, Proposition \ref{p-partialfraction} reads
\begin{align}
  \label{e-E-ctxNote}
\CT_\lambda E = P(0)+ \sum_i \chi(u_i \lambda^{a_i}<1) \mathcal{A}_{ 1-u_i \lambda^{a_i}}\ E .
\end{align}

The following result can be used to avoid the computation of $P(0)$.
\begin{lem}\label{l-ctE}
Let $E$ be as in \eqref{e-E-xform}.
If $E$ is \emph{proper} in $\lambda$, i.e., the degree in the numerator is less than the degree in the denominator, then
\begin{align}
  \label{e-ctE}
\CT_\lambda E= \sum_{i=1}^n \chi(u_i\lambda^{a_i}<1) \mathcal{A}_{ 1-u_i x^{a_i}}\ E ;
\end{align}
If $E|_{\lambda=0}= \lim_{\lambda\to 0} E$ exists, then
\begin{align}\label{e-ctE-dual}
 \CT_\lambda E =E|_{\lambda=0}- \sum_{i=1}^n \chi(u_i\lambda^{a_i}>1) \mathcal{A}_{ 1-u_i x^{a_i}}\ E .\tag{\ref{e-ctE}$'$}
\end{align}
\end{lem}
Formula \eqref{e-ctE-dual} is a kind of dual of \eqref{e-ctE}.
Because of these two formulas, it is convenient to call the denominator factor $1-u_i\lambda^a_i$ \emph{contributing} if
$u_i\lambda^{a_i}$ is small and \emph{dually contributing} if $u_i \lambda^{a_i}$ is large.
Now we will also denote
$$ \CT_\lambda \frac{1}{\underline{1-u_s \lambda^{a_s}}} E (1-u_s \lambda^{a_s}) = \mathcal{A}_{ 1-u_s \lambda^{a_s}}\ E=A_s(0).$$
For this notation, one can think that when taking the constant term in $\lambda$, only the single underlined factor of the denominator contributes.

\begin{proof}[Proof of Lemma \ref{l-ctE}]
Suppose we have the partial fraction decomposition given in \eqref{e-E-parfrac}.
i) If $E$ is proper in $\lambda$ then $P(\lambda)=0$ and the first equality holds;
ii) If $E|_{\lambda=0}$ exists, then $p(\lambda)$ must be $0$. Now setting $\lambda=0$ and applying \eqref{e-E-ctxNote} gives
$$E|_{\lambda=0} = P(0)+\sum_{i} A_i(0) = \CT_\lambda E + \sum_{i=1}^n \chi(u_i\lambda^{a_i}>1) \mathcal{A}_{1-u_i \lambda^{a_i}}\ E.$$
The second equality then follows by subtraction.
\end{proof}

If $E$ is proper and has no pole at $\lambda=0$ then both formulas \eqref{e-ctE}
and \eqref{e-ctE-dual} apply and we can choose to use the simpler one.

\begin{thm}\label{t-P(0)}
  Let $E$ be as in \eqref{e-E-xform}. Split $L(\lambda)$ as $L_1(\lambda)+L_2(\lambda)$, where $L_1$ contains only positive powers in $\lambda$ and $L_2$ contains only nonpositive powers in $\lambda$. Set $E_i =E L_i(\lambda)/L(\lambda)$ for $i=1,2$. Then
\begin{align}
  \label{e-ctE12}
\CT_\lambda E = \sum_{i}\chi(u_i \lambda^{a_i}<1) (\mathcal{A}_{1-u_i \lambda^{a_i}}\ E_2)  - \sum_{i}\chi(u_i \lambda^{a_i}>1) (\mathcal{A}_{1-u_i \lambda^{a_i}}\ E_1).
\end{align}
\end{thm}
\begin{proof}
  By linearity and the fact that $E=E_1+E_2$, we have
  $$ \CT_\lambda E = \CT_\lambda E_1 +\CT_\lambda E_2.$$
It is clear that $E_1|_{\lambda=0}=0$ and that $E_2$ is proper. Now apply Lemma \ref{l-ctE} to get \eqref{e-ctE12}.
\end{proof}

\subsection{A recursion for the contribution of a single factor \label{s-rec-single}}

The contribution of a linear factor is easy:
\begin{align}
  \mathcal{A}_{1-u \lambda} \ E = \CT_\lambda \frac{1}{\underline{1-u\lambda}} E(1-u\lambda) = E(1-u\lambda)|_{\lambda=1/u}. \label{e-linearfactor}
\end{align}
However, effective computation for nonlinear factors was a long standing problem. See, e.g., \cite{MPAA2}. One can factor $1-u \lambda^a$ into linear factors using roots of unity, but there is no simple way to get rid of the roots of unity in the final outcome. We present here a Euclid style algorithm dealing with the nonlinear case.

Let $E$ be as in \eqref{e-E-xform} and let $1-u \lambda^a$ be a denominator factor of $E$. We construct as follows a nicer $E'$ with the property
$\mathcal{A}_{1-u \lambda^a} \ E = \mathcal{A}_{1-u \lambda^a} \ E'$.

For simplicity we assume $u\lambda^a=u_1\lambda^{a_1}$. Recall that
$$ A_1(\lambda) \equiv E(1-u\lambda^a)  \pmod{\langle 1-u\lambda^a \rangle}.$$

The following clearly holds,
$$\lambda^m \equiv u^{-\ell} \lambda^r\pmod  {\langle 1-u\lambda^a \rangle} \text{ if } m =\ell a +r .$$
Particularly, the \emph{remainder} $\mrem( \lambda^m, 1-u\lambda^a,\lambda) $ and the \emph{signed remainder} $\srem( \lambda^m, 1-u\lambda^a,\lambda)$ of $\lambda^m$ when dividing by $1-u\lambda^a$ is defined to be
\begin{align}
  \mrem( \lambda^m, 1-u\lambda^a,\lambda)&=u^{-\ell} \lambda^r , \text{ where } m =\ell a +r,\  0 \le r< a.\\
  \srem( \lambda^m, 1-u\lambda^a,\lambda)&=u^{-\ell} \lambda^r , \text{ where } m =\ell a +r,\  -a/2 < r\le a/2.
\end{align}
These definitions linearly extend for Laurent polynomials.

The new idea is that $A_1(0)$ can be unearthed by using a \emph{better} representative of $A_1(\lambda)+ \langle 1-u\lambda^a \rangle$ instead of the explicit formula of $A_1(\lambda)$. We have
\begin{align*}
  A_1(\lambda) &\equiv E (1-u_1 \lambda^{a_1}) \equiv \frac{L(\lambda)}{\prod_{i=2}^n (1-u_i \lambda^{a_i})}  \pmod{\langle 1-u\lambda^a \rangle}\\
  &\equiv \frac{L(\lambda)}{\prod_{i=2}^n (1-u_i\; \srem(\lambda^{a_i}, 1-u\lambda^a, \lambda) )}  \pmod{\langle 1-u\lambda^a \rangle}.
\end{align*}
This can be rewritten in the following form by the fact that $1/(1-v)=(-v)^{-1}/(1-v^{-1})$:
$$A_1(\lambda) \equiv \frac{\pm M L(\lambda)}{\prod_{i=2}^n (1-v_i \lambda^{b_i} )}  \pmod{\langle 1-u\lambda^a \rangle},$$
where $M$ is a monomial and $0\le b_i\le a/2$ for all $i$.

Now we have to split into two cases:

i) if all the $b_i$ are $0$, then $a$ divides all the $a_i$ and we immediately obtain
$$ A_1(\lambda) = \frac{\mrem (L(\lambda), 1-u \lambda^a, \lambda)}{\prod_{i=2}^n (1-u_i\; u^{-a_i/a} )}.$$
Setting $\lambda=0$ gives the desired $A_1(0)$.

ii) if at least one of $b_i$ is greater than $0$, then rewrite
$$ A_1(\lambda) \equiv \frac{ \lambda L'(\lambda)}{\prod_{i=2}^n (1-v_i \lambda^{b_i} )}  \pmod{\langle 1-u\lambda^a \rangle},$$
where
$$L'(\lambda)=\pm \mrem(\lambda^{-1} M L(\lambda), 1-u\lambda^a)$$ is a polynomial in $\lambda$ of degree less than $a$. Now comes
the crucial observation:
\begin{align*}
 A_1(0) = \CT_\lambda \frac{1}{\underline{1-u \lambda^a}}\; \frac{ \lambda L'(\lambda)}{\prod_{i=2}^n (1-v_i \lambda^{b_i} )}.
\end{align*}
In our notation, this is just
\begin{align*}
\mathcal{A}_{ 1-u \lambda^{a}} \ E =\mathcal{A}_{ 1-u \lambda^{a}}\ E',
\end{align*}
where
\begin{align} \label{e-E'single}
  E' = \frac{1}{{1-u \lambda^a}} \frac{ \lambda L'(\lambda)}{\prod_{i=2}^n (1-v_i \lambda^{b_i} )}
\end{align}
is a proper rational function with $E'|_{\lambda=0}=0$.
It follows by the partial fraction decomposition of $E'$ and then setting $\lambda=0$ that
\begin{align}
 \mathcal{A}_{ 1-u \lambda^{a}} \ E' = -\sum_{i=2}^n \mathcal{A}_{ 1-v_i \lambda^{b_i}} \ E'.
\end{align}
Note that the terms for each $b_i=0$ vanish.

Thus we have proved the following result.
\begin{prop}\label{p-recursion}
Let $E$ be as in \eqref{e-E-xform} and let $1-u \lambda^a=1-u_1 \lambda^{a_1}$ be a denominator factor.
If $a$ divides every $a_i, i\ge 2$, then
$$ \mathcal{A}_{ 1-u \lambda^{a}}\ E = \frac{\mrem (L(\lambda), 1-u \lambda^a, \lambda)}{\prod_{i=2}^n (1-u_i\; u^{-a_i/a} )} \Big|_{\lambda=0};$$
If at least one of the $a_i$ is not divisible by $a$, then construct $E'$ as in \eqref{e-E'single}. We have
$$\mathcal{A}_{ 1-u \lambda^{a}}\ E  = -\sum_{i}  \mathcal{A}_{ 1-v_i \lambda^{b_i}}\ E',$$
where $0<b_i\le a/2$ for all $i$ and the number of terms is at most $n-1$.
\end{prop}

Repeated application of Proposition \ref{p-recursion} will give a sum of simple rational functions. The number of terms only depends on the $a_i$'s and the process is similar to Euclid's gcd algorithm. Denote this number by $f(i; a_1,a_2,\dots,a_n)$ where $i$ corresponds to the factor $1-u_i \lambda^{a_i}$. Then $f(i; a_1,a_2,\dots,a_n)$ is recursively determined by the following rules:

\begin{enumerate}
  \item If $a_j=0$ for all $j\ne i$ then $f(i; a_1,a_2,\dots,a_n)=1$;

\item We have $f(i; a_1,a_2,\dots,a_n) =f(i; b_1,b_2 ,\dots,b_n)$, where $b_i=a_i$ and \\
$b_j=\min(\mrem(a_j,a_i), a_i- \mrem(a_j,a_i))$ for $j\ne i$.

\item If $a_j\le a_i/2$ for all $j\ne i$, then
$$ f(i; a_1,a_2,\dots,a_n) = \sum_{j\ne i} f(j, a_1,a_2,\dots,a_n).$$
\end{enumerate}

If $n=1$ then $f(1;a_1)=1$. If $n=2$ we also have $f(i;a_1,a_2)=1$, because the sum of the recursion contains a single term.
For $n=3$,  computational evidence suggests that $f(1;a_1,a_2,a_3)$ is almost $O((\log a_1)^2)$.
For larger $n$, we raise the following problem:

Let $f(i;a_1,a_2,\dots, a_n)$ be defined as above. Prove or disprove that $f(i;a_1,a_2,\dots, a_n)$ is a polynomial in $\log a_i$. Note that the obvious bound is
$f(i;a_1,a_2,\dots, a_n)< n^{\log_2 a_i}$.

If the answer is positive, then we will obtain a simple polynomial time algorithm at least for one variable elimination. But this might not be the right problem, since we have a lot of freedom to apply the partial fraction technique, and the current approach is too elementary.

\section{The Maple package \texttt{CTEuclid} \label{s-CTEuclid}}

The algorithm in Section 4 is implemented by the Maple package \texttt{CTEuclid}, which can be downloaded from the following link\\
 \centerline{https://www.dropbox.com/sh/scepodyyn4ff7ro/ffhqmeN7ne/MPA,}\\
where two demo files are provided to explain how to use the package. One file works on magic squares of order up to 5 and the other file works on the Sdd problem \cite{sdd5} of order up to 5. Both files contain the essential idea of the ``delay trick on slack variables" in Section \ref{s-sdd} for attacking the order 6 case. Here we only report the Ehrhart series for  magic squares of order 6.

\texttt{CTEuclid} is the first package designed for complicated or even benchmark problems.
It outperforms \texttt{Ell2} for high-dimensional problems, and is only a bit slower for some low-dimensional problems because
\texttt{Ell2} does not introduce slack variables.
%After loading the package, the command CTEuclid($E$,\texttt{vars}, \texttt{va}) will deliver $\CT_{\Lambda} E$ in $K$, where $E$ is an Elliott-rational function, $K$ is specified by the list \texttt{vars}$=[x_1,\dots, x_m]$ of all variables, and the \texttt{va}$=[\lambda_1,\dots, \lambda_r]$ stands for the set $\Lambda$.
For the sake of clarity, we explain by considering 
several knapsack-type problems below.

\subsection{Knapsack-type problems}
Let $a_0,a_1,\dots,a_n$ be positive integers with $a=(a_1,\dots, a_n)$, $\gcd(a_1,\dots,a_n)=1$ and $a_i\le a_0$ for all $i$, and let
$$ P= \{ x\in \mathbb{R}^n : ax=a_0, x\ge 0\}.$$
A basic problem is to determine if $P$ contains an integer vector, or how many integer vectors $P$ contains. The former is called the integer programming feasibility problem. See \cite{AardalLenstra} for an introduction to this topic. Here we concentrate on the second problem, which is also called the knapsack-type problem. Clearly we have
$$ \# P = [x^{a_0}]  \frac{1}{(1-x^{a_1}) \cdots (1-x^{a_n})} =  \CT_x \frac{1}{x^{a_0} (1-x^{a_1}) \cdots (1-x^{a_n})}.  $$

\begin{exa}
  Compute the following constant term:
  $$ \CT_x \frac{1}{x^{41} (1-x ) (1-x^5 ) (1-x^{14} )}.$$
\end{exa}
\begin{proof}
  [Solution.]
  We first add slack variables and get
 \begin{align*}
   \CT_x E =\CT_x \frac{x^{-41}}{ (1-x z_1) (1-x^5 z_2) (1-x^{14} z_3)}= \CT_x \frac{x^{-41}}{\underline{(1-x z_1) (1-x^5 z_2) (1-x^{14} z_3)}},
 \end{align*}
 where the three underlined factors are contributing. For the last factor we have
\begin{align*}
  \CT_x \frac{x^{-41}}{ (1-x z_1) (1-x^5 z_2) \underline{(1-x^{14} z_3)}} &= \CT_x \frac{x z_3^3}{(1-x z_1) (1-x^5 z_2) \underline{(1-x^{14} z_3)}} \\
&=- \CT_x \frac{x z_3^3}{\underline{(1-x z_1) (1-x^5 z_2)} (1-x^{14} z_3)},
\end{align*}
where in our notation, only the first two factors are contributing. The flexibility of our algorithm allows us to obtain the following combined form:
\begin{align*}
  \CT_x E &= \CT_x \frac{1}{x^{41} \underline{(1-x z_1) (1-x^5 z_2)} (1-x^{14} z_3)}+ \CT_x \frac{1}{x^{41} (1-x z_1) (1-x^5 z_2)\underline{ (1-x^{14} z_3)}} \\
   &= \CT_x \frac{x^{-41}}{ \underline{(1-x z_1) (1-x^5 z_2)} (1-x^{14} z_3)}- \CT_x \frac{x z_3^3}{ \underline{(1-x z_1) (1-x^5 z_2)} (1-x^{14} z_3)}\\
   &= \CT_x \frac{x^{-41}- x z_3^3}{\underline{(1-x z_1) (1-x^5 z_2)} (1-x^{14} z_3)}.
\end{align*}
Now the first contribution is simple:
\begin{align*}
 \CT_x \frac{x^{-41}- x z_3^3}{\underline{(1-x z_1)} (1-x^5 z_2) (1-x^{14} z_3)}= \frac{z_1^{41}- z_1^{-1} z_3^3}{ (1-z_1^{-5} z_2) (1-z_1^{-14} z_3)}.
\end{align*}
The contribution of the second factor becomes
\begin{align*}
 % \CT_x \frac{x^{-41}- x z_3^3}{(1-x z_1)\underline{ (1-x^5 z_2)} (1-x^{14} z_3)}
 \CT_x &\frac{x^{-41}- x z_3^3}{(1-x z_1)\underline{ (1-x^5 z_2)} (1-x^{-1} z_3 z_2^{-3})} \\
  &= \CT_x -\frac{x^{5} z_3^{-1} z_2^{12} - x^2 z_3^2 z_2^{3}}{(1-x z_1)\underline{ (1-x^5 z_2)} (1-x z_3^{-1} z_2^{3})} \\
  &= \CT_x \frac{x^{5}z_2^{12} z_3^{-1} - x^2 z_3^2 z_2^{3}}{\underline{ (1-x z_1)}(1-x^5 z_2) \underline{ (1-x z_3^{-1} z_2^{3})}} \\
  &=  \frac{z_1^{-5}z_2^{12} z_3^{-1} - z_1^{-2} z_3^2 z_2^{3}}{(1-z_1^{-5} z_2) { (1-z_1^{-1} z_3^{-1} z_2^{3})}}+ \frac{z_3^4z_2^{-3} - z_3^4z_2^{-3} }{{ (1-z_3z_2^{-3} z_1)}(1-z_3^5z_2^{-14}) }.
\end{align*}
Thus we obtain a sum of three terms and come to Step 3. We need to make a substitution so that
$ z_1^{-5} z_2,\ z_1^{-14} z_3,\ z_1^{-1} z_3^{-1} z_2^{3},\&\; z_3^5z_2^{-14}$ are not equal to $1$. One choice is $z_1=1, z_2=t, z_3=t$. Then the constant term becomes
\begin{align*}
  \frac{1-  t^3}{ (1-t)^2} + \frac{t^{11} -  t^5}{(1-t) { (1-t^{2})}}+ \frac{t - t }{{ (1-t^{-2} )}(1-t^{-9}) }=\frac{1-  t^3}{ (1-t)^2} + \frac{t^{11} -  t^5}{(1-t) { (1-t^{2})}}.
\end{align*}
Then we let $t=1+s$ and take the constant term in $s$ separately to get
\begin{align*}
 \CT_s &\frac{1- (1+s)^3}{ s^2} + \CT_s \frac{(1+s)^{11} -  (1+s)^5}{s^2(2+s) }\\
 &= -3 +[s^2] (1+11 s+55 s^2 -(1+5s +10 s^2)) \frac{1}{2} (1-s/2+s^2/4))\\
 &= -3 + [s] (6 + 45 s) (1/2-s/4) = -3 +\frac{45}{2}-\frac{6}{4} =18.
\end{align*}
\end{proof}

Next we consider a relatively complicated example, which is Example 1 of \cite{AardalLenstra}.
\begin{exa}
Show that the polytope $P$ contains no integer lattice points, where
  $$P=\{ x\in \mathbb{R}^3: 12,223 x_1+12,224 x_2 +36,671 x_3=149,389,505, x\ge 0\}.$$
\end{exa}
\begin{proof}[Sketch of the Proof.]
The problem is equivalent to computing the following constant term:
$$\CT_x {\frac {1}{{x}^{149389505} \left( 1-{x}^{12223} \right)  \left( 1-{x}^
{12224} \right)  \left( 1-{x}^{36671} \right) }}.
$$
\texttt{CTEuclid} will give a sum of 10 terms, which reduces by cancelation to a sum of 4 terms.
%\begin{multline*}
% {\frac {{z_{{1}}}^{12224}{z_{{2}}}^{12224}}{ \left( z_{{1}}{z_{{2}}}^{
%2}-z_{{3}} \right)  \left( -{z_{{2}}}^{12223}+{z_{{1}}}^{12224}
% \right) }}-{\frac {{z_{{2}}}^{24447}}{ \left( z_{{1}}{z_{{2}}}^{2}-z_
%{{3}} \right)  \left( -{z_{{2}}}^{12223}+{z_{{1}}}^{12224} \right) }} \\
%-
%{\frac {{z_{{2}}}^{48895}}{ \left( z_{{1}}{z_{{2}}}^{2}-z_{{3}}
% \right)  \left( {z_{{2}}}^{36671}-{z_{{3}}}^{12224} \right) }}+{
%\frac {{z_{{3}}}^{12224}{z_{{2}}}^{12224}}{ \left( z_{{1}}{z_{{2}}}^{2
%}-z_{{3}} \right)  \left( {z_{{2}}}^{36671}-{z_{{3}}}^{12224} \right)
%}}.
%\end{multline*}
By letting $z_1=1,z_2=t,z_3=t$ we obtain
$$-{\frac {{t}^{12223}}{ \left( t-1 \right)  \left( {t}^{12223}-1
 \right) }}+{\frac {{t}^{24446}}{ \left( t-1 \right)  \left( {t}^{
12223}-1 \right) }}-{\frac {{t}^{36670}}{ \left( t-1 \right)  \left( {
t}^{24447}-1 \right) }}+{\frac {{t}^{12223}}{ \left( t-1 \right)
 \left( {t}^{24447}-1 \right) }}
 .$$
To eliminate the slack variable $t=1$, we let $t=e^s$ and compute the constant term in $s$
for each term separately. For instance, the first term becomes,
\begin{align*}
  \CT_s -{\frac {{e}^{12223s}}{ \left( e^s-1 \right)  \left( {e}^{12223s}-1
 \right) }} &= [s^2] -{{e}^{12223s}}\times \frac {s}{ \left( e^s-1 \right)}\times \frac{s}{ \left( {e}^{12223s}-1
 \right) }\\
 &=[s^2] -(1+12223s+\frac12 12223^2s^2) (1-s/2+s^2/12) \\
 &\qquad \times (1/12223-s/2+12223s^2/12) =-{\frac {149365061}{146676}}.
\end{align*}
The four constant terms sum to $0$. This completes the proof.
\end{proof}

Still from the article \cite{AardalLenstra}, a very hard instance of the knapsack-type problem concerns the 4 dimensional polytope with
$$ a_0=89643481, (a_1,\dots, a_5)=( 12223 , 12224,36674,61119,85569).$$
Aardal and Lenstra show that $P$ contains no integer vectors in 0.01 second of cpu time while the Branch and Bound method takes more than
8139 seconds of cpu time. When dealing with this problem, \texttt{CTEuclid} gives 398 terms and returns $0$ in about 0.4 seconds of cpu time. The advantage of our algorithm is that we can compute the number $\# P$ for different $a_0$ in about the same time. For instance, if
$a_0=89643481 \times 1001$,  \texttt{CTEuclid}  still gives 398 terms and returns 94267024658624993843 in about 0.4 seconds.
It is worth noting that many of the 398 terms cancel
with only 118 terms left. It might be interesting to understand how these terms cancel with each other.
We also tried random examples with $100000 \le a_i \le  2500000$; The performance is not nice when $n\ge 5$. The algorithm does not seem to be of polynomial time.

\medskip
The above examples show that even if the final answer is simple, the middle step may give complicated results.
We make the following observation: Step 1 takes no time; Step 2 is the most important step, where we hope the number of terms $nt$ we get is small; Step 3 of dispelling the slack variables is the most time consuming step. Its running time is almost linear to $nt$.
This leads to the following two technical treatment when dealing with complicated or even benchmark problems.

\begin{enumerate}
  \item In Step 2, we save some data for later use: the number $nt$ and the data for every 1000 terms we obtained are saved in different files, and the data for all bad denominator factors are saved in a file.

      \item The running time for Step 3 is estimated as a linear function of $nt$. This information helps us to decide if we shall stop and try to use some tricks to reduce the number $nt$.
\end{enumerate}

\subsection{Direct computation of the Ehrhart series\label{s-Ehrhart}}
Given a bounded rational polytope  $P=\{\alpha : A\alpha =b,\; \alpha \ge 0 \} \subset \mathbb{R}^n$, the function
$$ i_P (k):= \# (k P \cap \mathbb{Z}^n) = \# \{\alpha \in \mathbb{Z}^n: A\alpha =k b,\; \alpha \ge 0 \}$$
defined for any positive integer $k$ was first studied by E. Ehrhart \cite{Ehrhart}. It is called the Ehrhart polynomial when the vertices of $P$ are integral and is called the Ehrhart quasi-polynomial for arbitrary rational polytopes \cite[Ch. 4]{EC1}. For us it is easier to describe it using generating functions.
The Ehrhart series of $P$ defined by
$$I_P(q)= \sum_{k\ge 0} i_P(k) q^k$$
is an Elliott-rational function. It has close connection with the Hilbert series of some graded algebras.

An important problem is to compute the Ehrhart quasi-polynomial for a given $P$. An earlier method is to compute $i_P(k)$ for sufficiently many $k$ and then use the Lagrange interpolation formula to construct $i_P(k)$. We can compute the Ehrhart series directly by the following constant term representation.
$$I_P(q) = \CT_{\Lambda} \frac{1}{
\prod_{j=1}^n (1- \lambda_1^{a_{1,j}} \lambda_2^{a_{2,j}} \cdots \lambda_r^{a_{r,j}} x_j)} \times \frac{1}{1-q \lambda_1^{-b_1} \cdots \lambda_r^{-b_r}} \Big|_{x_j=1}.$$
This corresponds to a rational cone or the homogeneous system $(A,-b) \alpha =0$. This leads to a combined method for Ehrhart series computation: Use \texttt{LattE} to do the rational cone decomposition and then use our way of eliminating the slack variables. There is no implementation for this approach yet.
We remark that a similar idea was proposed in \cite{LattE} to avoid the use of Brion's theorem. But the authors of \cite{LattE} only computed $i_P(k)$ for particular $k$.

Many benchmark problems are related to the computation of Ehrhart series.
The counting of magic squares and its variations is one of the common topics in both combinatorics and computational geometry. The definition  of magic squares is different in different literature. Here an $n$ by $n$ nonnegative integer matrix $M=(a_{i,j})_{n\times n}$ is said to be a magic square with magic sum $m$ if its row sums, column sums, and two diagonal sums are all equal to $m$. That is, the order $n$ magic square polytope $MS_n$ is defined by the following linear constraints:
\begin{align*}
  a_{i,1}+a_{i,2}+\cdots+a_{i,n}=1,   \text{ for }1\le i\le n \\
  a_{1,j}+a_{2,j}+\cdots+a_{n,j}=1,    \text{ for }1\le j\le n \\
  a_{1,1}+a_{2,2}+\cdots +a_{n,n}=1, \ a_{n,1}+a_{n-1,2}+\cdots +a_{1,n}=1.
\end{align*}

The determination of $I_{MS_n}(q)$ is known for $n=3$, and for $n=4$. Many algorithms meet trouble for the $n=5$ case. See e.g., \cite{MagicSquares}. The first solution for the order 5 magic squares was reported in \cite{Loera}.

Our approach is along the line of MacMahon's partition analysis. Let $\lambda_i$ index the $i$-th row equation for each $i$, let $\mu_j$ index the $j$-th column equation for each $j$, and let $\nu_1$ and $\nu_2$ index the two diagonal equations. Then it is not hard to see that
$$ \sum_{m=0}^\infty \sum_{M} \prod_{1\le i,j\le n} x_{i,j}^{a_{i,j}} q^m  = \CT_{\lambda, \mu,\nu} F_n(x, q;\lambda,\mu,\nu),$$
 where the second sum ranges over all magic squares $M$ with magic sum $m$, and
 $$ F_n(x, q;\lambda,\mu,\nu) = \prod_{1\le i,j\le n} \frac{1}{1-x_{i,j} \lambda_i\mu_j \nu_1^{\chi(i=j)} \nu_2^{\chi(i+j=n+1)}} \frac{1}{1- q(\lambda_1\cdots \lambda_n \mu_1\cdots\mu_n \nu_1\nu_2)^{-1}}.$$
In particular, setting $x_{i,j}=1$ gives the generating function for $I_{MS_n}(q)$.
$$ I_{MS_n}(q) = \CT_{\lambda, \mu,\nu}  \prod_{1\le i,j\le n} \frac{1}{1-\lambda_i\mu_j \nu_1^{\chi(i=j)} \nu_2^{\chi(i+j=n+1)}} \times \frac{1}{1- q(\lambda_1\cdots \lambda_n \mu_1\cdots\mu_n \nu_1\nu_2)^{-1}}.$$
We believe that the order 6 magic squares problem should be computed quickly by the proposed approach here. We describe our computation by the \texttt{CTEuclid} package in the next subsection.

\subsection{Computation for magic squares of order 6}
In Section \ref{s-Ehrhart} we have converted the Ehrhart series for the order $n$ magic squares polytope to a constant term.
Applying the \texttt{CTEuclid} package will give the desired Ehrhart series. There is no difficulty for the cases $n=3,4$. Indeed the author's \texttt{Ell2} package is faster in these two cases but meet memory problem for the $n=5$ case. Our \texttt{CTEuclid} package computes the $n=5$ case in about 2700 seconds of cpu time. The $n=6$ case is much more complicated, and was not known before. The  Ehrhart series for order 6 magic squares has been put at Sloane's integer sequence website \cite[A216039]{sloane}. It looks like
\begin{align*}
I_{MS_6}(q) &= \frac{(1-q)^3 N}{\left( 1-
{q}^{3} \right) ^{5}  \left(1- {q}^{4} \right) ^{5} \left( 1-{q}^{5}
 \right) ^{4} \left(1-{q}^{6} \right) ^{6} \left( 1-{q}^{7}
 \right) ^{3}    \left( 1-{q}^{8} \right) ^{2} \left( 1-{q}^{9} \right)
  \left(1- {q}^{10} \right)}\\
  &=  1+96q+14763q^2+957936q^3+33177456q^4+718506720q^5+\cdots
\end{align*}
where
 \begin{multline*}
   N={q}^{138}+99\,{q}^{137}+15057\,{q}^{136}+ \cdots\\
   + 21382798694422310755770332936 q^{69}
    +\cdots + 15057 q^2+99 q+1.
 \end{multline*}
The result is obtained by parallel computations modulo three different large primes. The total cpu time is about $70\times 3$ days. The author would like to thank his officemates for running these computations on their computers.

For the order 6 magic squares problem, Step 2 of the \texttt{CTEuclid} algorithm takes about 10 hours cpu time to obtain $nt$ terms. Since $nt$ is too large, we have to split the data and save it in different files, with each file containing 1000 terms. The run time for Step 3 can be estimated for it is about linear in $nt$. Because $nt$ is large, we must do the computation modulo a large prime to avoid the large integer problem. Here we choose $p_1=636,286,597$ for our first computation. Our estimated run time for Step 3 is about 108 days of cpu time. This is based on the observation that it takes about 5 minutes to dispel the slack variables for the $1000$ terms in each file.

Compared with 108 days, it is a small pay off to try to reduce the number $nt$. The flexibility of our algorithm allows us to reduce $nt$ by one third so that Step 3 can be done in about 70 days. The idea is that we can delay the adding of slack variables, which will be explained in Section \ref{s-sdd}. The saved data can be reused in Step 3 by computing it modulo different primes. Indeed we also did the computation modulo $p_2=460,710,223,302,903,961$ and $p_3=1,073,129,417,747,493,923$.

Finally we use the Chinese remainder theorem to reconstruct the generating function $N/D$. We conclude that this is the desired solution because
the maximum coefficient in $N$ is about $6.797227759\times 10^{-17} p_1p_2p_3$. Of course, for a rigorous proof, one needs a bit more work. A possible approach is
based on the following observations: If $kP$ is a $d$ dimensional integral simplex, then $I_P(q)=N'/(1-q^k)^{d+1}$ for some polynomial $N'$ with nonnegative integer coefficients
(see e.g., \cite[Ch. 4]{EC1}); The above statement still holds if $kP$ is a $d$ dimensional integral polytope by using a carefully chosen unsigned simplex decomposition; $N'|_{q=1}/k^{d+1}$ is well-known to be the relative volume of $P$, which can be estimated by known methods in Geometry.

\subsection{The Sdd\emph{k}
problem and the flexibility of our algorithm \label{s-sdd}}
The following constant term is known as the Sdd$k$ problem in \cite{sdd5}, which has connections with symmetric functions, representation theory and invariant theory. The term Sdd stands for the Schur function $s_{d,d}$ indexed by the partition $(d,d)$.
\begin{align}
 W_k(q )=\CT_{a_1,\dots, a_k} \frac{{ \prod_{i=1}^k}\big(1-{ a_i^2}\big)}{
{\displaystyle \prod_{S\subseteq \{1,\dots, k\}}}\Big(1-q \prod _{i\in S }
a_i/{\prod _{j\not\in S }}a_j\Big)}
\label{I.13}
\end{align}
The Sdd$k$ series for $k\le 4$ are nice:
\begin{align}
 W_2( {q})= \frac{1}{1-q^2}\qquad W_3(q)= \frac{1}{
1-q^4} \qquad  W_4(q)= \frac{1}{(1-q^2)(1-q^4)^2(1-q^6)},
\label{3.20}
\end{align}
but the Sdd5 series $W_5(q)$ has a large numerator.
The series $ W_5(q)$ was first obtained by Luque-Thibon \cite{5} in the context of quantum computing.
Their computation was carried out by a brute force use of the partial
fraction algorithm of the author.

The Sdd5 problem is the problem on which \texttt{LattE} fails but \texttt{Ell2} succeeds. The corresponding polytope for the Sdd5 problem is of dimension $27=2^5-5$. It is the intersection of 5 hyperplanes but has 2712 vertices, which make it expensive to apply Brion's theorem.

With the help of group actions, we were able to use \texttt{Ell2} to solve the Sdd5 problem in 5 minutes \cite{sdd5}. Now we can recompute $W_5(q)$ directly using the \texttt{CTEuclid} package in about 45 minutes. But with a few pre-works, we can recompute $W_5(q)$ in about 3 minutes.

Here we illustrate the flexibility of our algorithm by computing $W_k(q)$ for $k=3$ by hand. In this case two pre-works already compute $W_3(q)$. The idea extends for larger $k$ but then we need the help of the \texttt{CTEuclid} package.

Our first simplification is by working in the ring $\mathbb{Q}[a_1^{\pm 1},\dots,a_k^{\pm 1}][[q]]$, which has a simple but useful property that any invertible changing of variables by $a_1\to M_1, \dots,$ $a_k\to M_k$, where the $M_i$ are monomials in the $a$'s, does not change the constant term.
Let
$$ F= \frac{\left( 1-{a_{{1}}}^{-2} \right)  \left( 1-{a_{{2}}}^{-2} \right)
}{  \left( 1-{\frac {qa_{{1}}a_{{3}}}{a_{{2}}}}
 \right)  \left( 1-{\frac {qa_{{1}}}{a_{{2}}a_{{3}}}} \right)  \left(
1-{\frac {qa_{{2}}a_{{3}}}{a_{{1}}}} \right)  \left( 1-{\frac {qa_{{2}
}}{a_{{1}}a_{{3}}}} \right)\left( 1-{\frac {qa_{{3}}}{a_{{1}}a_{{2}}
}} \right)  \left( 1-{\frac {q}{a_{{1}}a_{{2}}a_{{3}}}} \right)
}, $$
which is invariant under the substitution of $a_3$ by $a_3^{-1}$. Then we have
\begin{align*}
  W_3(q)&= \CT_{a_1,a_2,a_3} F \cdot \frac{1-a_3^{-2} }{\left( 1-qa_{{1}}a_{{2}}a_{{3}} \right)  \left( 1-{\frac {qa_{{1}}a_{
{2}}}{a_{{3}}}} \right) } \\
(\text{ by partial fraction in } q)&= \CT_{a_1,a_2,a_3} F\cdot \frac{1}{ 1-{q}{a_1a_2a_3}}- \CT_{a_1,a_2,a_3} F \cdot   \frac{a_3^{-2}}{ 1-\frac {qa_{{1}}a_{{2}}}{a_{{3}}
}}
 \\
(\text{ by } a_3 \to a_3^{-1} \text{ in the first term}) &=  \CT_{a_1,a_2,a_3} F \cdot \frac{1}{ {1-\frac {q a_1 a_2}{a_3}}}
 - \CT_{a_1,a_2,a_3} F \cdot \frac{a_3^{-2}}{ {1-\frac {q a_1 a_2}{a_3}}}
 \\
&= \CT_{a_1,a_2,a_3} F \cdot \frac{1-a_3^{-2}}{ {1-\frac {q a_1 a_2}{a_3}}}.
\end{align*}

Next we make the change of variables by $a_k\to a_1a_2\cdots a_k$, followed by the change of variables by $a_i\to a_i^{1/2}$ for $i\le k-1$. Then $W_3(q)$ becomes the constant term of the following rational function.
$${\frac {\left( 1-a_{{1}}^{-1} \right)  \left(1- a_{{2}}^{-1}
 \right)  \left(1- (a_{{1}}a_{{2}}{a_{{3}}}^{2})^{-1} \right) }{ \left(1-\frac{q}{a_3} \right)  \underline{\left( 1-qa_{{1}}a_{{3}} \right)}  \left( 1-\frac{q}{a_2 a_3}\right)  \left( 1-qa_{{2}}a_{{3}} \right)  \left(1 - \frac{q}{a_1a_3} \right)  \left( 1-qa_{{3}} \right)  \left( 1-\frac{q}{a_1a_2a_3} \right) }}.$$

The second pre-work is based on the following observation: the introduction of the slack variables in Step 1 is intended to solve the multiple roots problem, so it should be delayed when suitable. We try to set $O_0=E$ under the larger working field $K=\mathbb{Q}((a_1))\cdots ((a_k))((q))$ in Step 1 and see if Step 2 works for some variable $\lambda$. If this succeeds, we put the result in $O_1$ and try this method for each summand of $O_1$, and so on. This delay trick has been used in the two demo files, as reported at the beginning of Section \ref{s-CTEuclid}.

For the sake of clarity, we illustrate the second pre-work in detail for the computation of $W_3(q)$ by hand. 
We will compute $W_3(q)$ only by Lemma \ref{l-ctE} and Equation \eqref{e-linearfactor} for linear factors. First take the constant term in $a_1$, where we have underlined the only contributing (linear) factor. This is done by removing the underlined factor and then setting $a_1=1/(q a_3)$. Two factors in the numerator cancel with the factors in the denominator. We obtain:
$$\frac{ \left(1- a_{2}^{-1}
 \right)   }{ \left(1-\frac{q}{a_3} \right)  \underline{\left( 1-qa_{{2}}a_{{3}} \right)}  \left(1 - q^2 \right)   \left( 1-\frac{q^2}{a_2} \right) }.$$
Note that this cancelation reduces the dimension of the problem, but it will not happen if we add the slack variables at the beginning. Taking the constant term similarly in $a_2$ gives
\begin{align*}
W_3(q) &= \CT_{a_3}   \frac{ \left(1- qa_3
 \right)   }{\underbrace{ \left(1-\frac{q}{a_3} \right)}   \left(1 - q^2 \right)   \left( 1-{q^3a_3} \right) } =   \frac{ \left(1- q^2
 \right)   }{  \left(1 - q^2 \right)   \left( 1-{q^4} \right) } =\frac{1}{1-q^4},
\end{align*}
where we need to be careful when taking the constant term in $a_3$: the rational function is not proper. We have under-braced the only dually contributing linear factor and used the dual formula of Lemma \ref{l-ctE}.

For the Sdd5 computation, the second pre-work proceeds to eliminate 3 variables to obtain
$$ W_5(q) = \sum_{i=1}^{62} \CT_{a_1,a_2}  T_i,$$
where $T_i$ are simple rational functions looking like:
\begin{multline*}
  \frac {2{a_{{2}}}^{7}{a_{{1}}}^{6}{q}^{3}}{ \left( -a_{{2}}+q
 \right) ^{3} \left( -a_{{1}}a_{{2}}+q \right) ^{3} \left( a_{{1}}-1
 \right) ^{2} \left( {q}^{2}a_{{1}}-1 \right) ^{2} \left( -a_{{2}}+{q}
^{3} \right) ^{2} \left( -1+{q}^{2} \right) ^{3}}\\
\times\frac1{ \left( -a_{{1}}a_{{2}
}+{q}^{3} \right) ^{2} \left( -1+qa_{{1}}a_{{2}} \right) ^{2} \left( -
1+qa_{{2}} \right) ^{2} \left( -a_{{1}}+{q}^{2} \right) ^{2}}
\end{multline*}
Note that most of the $T_i$ have monomial numerators due to some cancelations. Now we can use the \texttt{CTEuclid} package for each $T_i$ separately. It is crucial to work in the field $K$ of iterated Laurent series so that we can write each $T_i$ in its proper form and add the slack variables.

In this way we can reconstruct $W_5(q)$ in only about 3 minutes of cpu time. These ideas allow us to construct $W_6(q)$, which was first obtained (without proof) by geometric methods by Kraus and Wallach \cite{sdd6}.
\section{Concluding remark}

For the core problem in MacMahon's partition analysis described in Problem \ref{prob-heart}, we have developed two very different algorithms:
Algorithm 1 is a polynomial time algorithm in theory in Section 3 based on Barvinok's polynomial time algorithm; Algorithm 2 is an elementary Euclid style algorithm with implementation \texttt{CTEuclid} in Section 4, along the line of MacMahon's partition analysis. Both algorithms use the subalgorithm for dispelling the slack variables, which extends Barvinok's idea to the multivariable specialization.

Algorithm 1 is polynomial but has two weaknesses. i) The use of Brion's theorem may be costly if the number of vertices of the corresponding polytope $P$ is large. ii) It can not deal with polynomial numerators ``uniformly". This practical issue has been addressed in Remark \ref{rem-slack2}. There is no rigorous definition of ``uniform". What we mean here is to avoid trivial splitting of the numerator into monomials, since the resulting subproblems have the same complexity of the original one.

Algorithm 2 is not polynomial, but has some advantages. i) It can deal with polynomial numerator ``uniformly". The use of Theorem \ref{t-P(0)} and Proposition \ref{p-recursion} allows us to eliminate one variable and split into some subproblems. This is not a trivial splitting since the subproblems have fewer variables to eliminate. ii) It has a lot of flexibility in the framework of iterated Laurent series. This is especially true because we can use the delay trick on slack variables as explained in Section \ref{s-sdd}.

In practice, Algorithm 2 performs well when the entries of $A_{r\times n}$ are small and $r$ is much smaller than $n$. We do not have an implementation of Algorithm 1 yet.

Both algorithms are applicable to the general core problem and are designed for complicated or even benchmark problems.
In a complicated practical problem like order 6 magic squares counting, the last step of dispelling the slack variables takes more than 99 percent of the run time. Thus we shall consider trying to use the delay trick on slack variables to have nontrivial splitting into subproblems. We only say ``try" here because there is no guarantee that such splitting must give fewer terms to improve the performance.

The flexibility of our framework of iterate Laurent series makes it possible to improve on the CTEuclid Algorithm. We do not know how to improve on the geometric side. Step 2 is the crucial step, and we shall concentrate on reducing the number of terms obtained in this step so that the running time in Step 3 will be significantly reduced.

There are many ideas to improve the algorithm. An interaction with known theories will give hints for improvements. For instance, Stanley's monster reciprocity theory contains some algorithmic ideas. See \cite{StanMonster,XinMonster}. We outline below a possible improvement by noticing that
the number of terms obtained by Proposition \ref{p-recursion} is dependent on the entries $a_i$.

Let us consider the linear Diophantine system $A\alpha =b$ with augmented matrix
$ (A, b).$ Clearly elementary row operation will not change the solution set. So it is possible to find a matrix $(A',b')$ with small entries, and with the same solution set. This step may be achieved by the well-known Lenstra Lenstra Lovasz's (LLL) basis reduction algorithm \cite{LLL1,LLL2}.
The author is considering upgrading the \texttt{CTEuclid} package by using this idea.

Our ultimate goal is to develop a  classic algorithm in this subject in the near future. We believe that such an algorithm should contain the following features.
\begin{enumerate}
\item We shall deal with the inhomogeneous case directly and avoid using Brion's theorem, which is too expensive when the number of vertices is large.

\item We shall give a decomposition dealing with Laurent polynomial numerators in a uniform way. The outcome will be analogous to simplicial cones.

\item We shall apply Barvinok's decomposition of simplicial cones into unimodular cones or the like, which we believe to be key idea of Barvinok's polynomial algorithm.
\end{enumerate}

\medskip
\noindent
{\small \textbf{Acknowledgements:} The author would like to thank the anonymous referees for valuable suggestions and insightful questions to improve the presentation. This work was partially supported by the Natural Science Foundation of China (11171231).}

\end{document}